\documentclass[10pt]{amsart}

\usepackage{amssymb,amsthm,amstext,amsfonts,amsmath}

\newtheoremstyle{theorem}
     {11pt}
     {11pt}
     {}
     {}
     {\bfseries}
     {}
     {.5em}
     {\noindent\thmnumber{#2}. \thmname{#1}\thmnote{#3}}

\theoremstyle{theorem}

\newcommand{\arrow}{\mathbb{A}}
\newcommand{\csf}{{}\sp\omega{2}}
\newcommand{\C}{\mathfrak{c}}
\newcommand{\U}{\mathcal{U}}

\newcommand{\sorg}{\mathbb{S}}

\newcommand{\p}{\mathfrak{p}}
\newcommand{\Q}{\mathbb{Q}}
\newcommand{\cl}[2][X]{\mathrm{cl}_{#1}({#2})}
\newcommand{\B}{\mathcal{B}}
\newcommand{\D}{\mathcal{D}}
\newcommand{\pair}[1]{\langle #1 \rangle}

\newtheorem{thm}{Theorem}[section]
\newtheorem{lemma}[thm]{Lemma}
\newtheorem{propo}[thm]{Proposition}
\newtheorem{coro}[thm]{Corollary}

\newtheorem{exs}[thm]{Examples}

\newenvironment{ex2}{}{\hfill\qed\vspace{1ex}}

\title{Countable Dense Homogeneity and the Double Arrow Space}
\author{Rodrigo Hern\'andez-Guti\'errez}
\address{Centro de Ciencias Matem\'aticas, UNAM, A.P. 61-3, Xangari, Morelia, Michoac\'an, 58089, M\'exico}	
\email{rod@matmor.unam.mx}
\date{\today}

\subjclass[2010]{54B10, 54D65, 54F05}
\keywords{Countable dense homogeneous, Infinite power, Alexandroff-Urysohn double arrow}

\begin{document}

\begin{abstract}
Let $\arrow$ denote the Alexandroff-Urysohn double arrow space. We prove the following results: (a) $\arrow\times\csf$ is not countable dense homogeneous; (b) ${}\sp{\omega}{\arrow}$ is not countable dense homogeneous; (c) $\arrow$ has exactly $\C$ types of countable dense subsets. These results answer questions by Arhangel'ski\u \i, Hru\v s\'ak and van Mill.
\end{abstract}

\maketitle

\section{Introduction}

All spaces under consideration are assumed to be Hausdorff. A separable space $X$ is \emph{countable dense homogeneous} (\emph{CDH} for short) if every time $D$ and $E$ are countable dense subsets of $X$ there exists a homeomorphism $h:X\to X$ such that $h[D]=E$. Sections 14, 15 and 16 of \cite{arh-vm-homogeneity} provide a good reference on countable dense homogeneity. Some examples of CDH spaces include the real line, the Cantor set, second-countable manifolds and the Hilbert cube.

If one looks carefully at these examples of well-known spaces that are CDH, the common property among them is that they are all completely metrizable. So it is natural to ask whether there are more exotic spaces that are CDH. For example, in \cite[Corollary 2.4]{cdhdefinable} it was proved that CDH Borel spaces are completely metrizable and in \cite{lambda} it was shown that there is a non-definable CDH subspace of the real line.

In another direction, there are no known examples of CDH compact spaces of uncountable weight in ZFC. By the results in \cite[Theorem 4]{steprans-zhou} and \cite[Theorem 3.3]{cdhdefinable} it is known that the Cantor cube ${}\sp{\kappa}2$ is CDH if and only if $\kappa<\p$, where $\p$ is the pseudointersection number (see \cite{vd62}). Also, in \cite[Section 4]{arh-vm-cdh-cardinality} it is shown that CH implies that there is a CDH compact space of uncountable weight.

Let us recall the definition of the classical Alexandroff-Urysohn double arrow space $\arrow$ first defined in \cite{alex-uryh-compact}. Let $\arrow_0=(0,1]\times\{0\}$, $\arrow_1=[0,1)\times\{1\}$ and $\arrow=\arrow_0\cup\arrow_1$. Define the lexicographic (strict) order on $\arrow$ as $\pair{x,t}<\pair{y,s}$ if $x<y$ or $x=y$ and $t<s$. Then $\arrow$ is given the order topology. It is known that $\arrow$ is separable, first-countable, compact, $0$-dimensional and of weight $\C$. Notice that both $\arrow_0$ and $\arrow_1$ have the Sorgenfrey line topology as subspaces of $\arrow$ and both are dense in $\arrow$. Also, the function $\pi:\arrow\to[0,1]$ defined by $\pi(\pair{x,t})=x$ is a $\leq 2$-to-$1$ continuous function.

In \cite{arh-vm-cdh-cardinality}, A. Arhangel'ski\u \i{} and J. van Mill proved that $\arrow$ is not CDH and asked whether $\csf\times\arrow$ is CDH. Also, M. Hru\v s\'ak and J. van Mill have asked whether ${}\sp\omega\arrow$ is CDH. The objective of this note is to answer these questions. Moreover, the results presented shed some information about restrictions to CDH spaces that are products. We also show that there are exactly $\C$ different types of countable dense subsets of $\arrow$.

Let us recall some standard definitions, for a reference on general topology see \cite{eng}. A space is \emph{crowded} if it is non-empty and has no isolated points. A \emph{$\pi$-base} in a space $X$ is a collection $\B$ of non-empty open sets of $X$ such that for every non-empty open set $U\subset X$ there exists $V\in\B$ such that $V\subset U$. The \emph{$\pi$-weight} of $X$ is defined as the minimal cardinality of a $\pi$-base of $X$. We call $X\subset\csf$ a \emph{Bernstein set} if both $X$ and $\csf\setminus X$ intersect every closed crowded subspace of $\csf$. All intervals in a linearly ordered space are assumed to be non-empty. A space $X$ is homogeneous if for every $x,y\in X$ there is a homeomorphism $h:X\to X$ such that $h(x)=y$.

\section{Products that are CDH}

In this section we will prove that neither $\arrow\times\csf$ nor ${}\sp{\omega}{\arrow}$ are CDH. First we will give a key property of $\arrow$, compare with Theorem 4.6 of \cite{justin-sorgenfrey} and the well-known fact that every closed and crowded subspace of $\csf$ is homeomorphic to $\csf$. If $(X,<)$ is a linearly ordered set, an interval $[a,b]\subset X$ with $a<b$ and $X\cap(a,b)=\emptyset$ will be called a \emph{jump}.

\begin{propo}\label{closedinarrow}
Every closed and crowded subset of $\arrow$ is homeomorphic to $\arrow$.
\end{propo}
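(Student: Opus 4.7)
The plan is to reduce the statement to an explicit analysis via the projection $\pi:\arrow\to[0,1]$ defined by $\pi(\pair{x,t})=x$. Let $F\subseteq\arrow$ be closed and crowded, and set $K=\pi[F]\subseteq[0,1]$. Then $K$ is closed in $[0,1]$, and since $\pi$ is at most $2$-to-$1$, an isolated point of $K$ would yield at most two isolated points of $F$; the crowdedness of $F$ therefore forces $K$ to be perfect.

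The crucial structural step is to show that $F$ is completely determined by $K$. Because the basic neighborhoods of $\pair{x,0}$ in $\arrow$ are left half-open intervals of the form $(\pair{y,1},\pair{x,0}]$ with $y<x$, this point is approached only from below in the order; together with $F$ being closed and crowded, this gives $\pair{x,0}\in F$ if and only if $x\in K$ and $x$ is a left accumulation point of $K$, and symmetrically $\pair{x,1}\in F$ if and only if $x\in K$ and $x$ is a right accumulation point of $K$. Consequently, for each maximal gap $(a,b)$ of $K$ only the outer two points $\pair{a,0}$ and $\pair{b,1}$ belong to $F$ and form a jump of $F$; for every other non-extreme $x\in K$ both $\pair{x,0}$ and $\pair{x,1}$ lie in $F$ and form a jump; and $\min F=\pair{\min K,1}$, $\max F=\pair{\max K,0}$.

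With this description in hand, I would build the homeomorphism $\Phi:F\to\arrow$ explicitly. Choose a continuous non-decreasing surjection $\phi:K\to[0,1]$ with $\phi(\min K)=0$, $\phi(\max K)=1$, constant on each pair $\{a,b\}$ of endpoints of a maximal gap of $K$, and injective elsewhere; such a $\phi$ exists for every perfect subset of $[0,1]$ by a Cantor-function-style construction. Define $\Phi(\pair{x,t})=\pair{\phi(x),t}$ for $\pair{x,t}\in F$. Using the characterization above, one checks that $\Phi$ takes values in $\arrow$, is a bijection, and is order-preserving: the identifications made by $\phi$ at endpoints of a gap $(a,b)$ match exactly the points of $\pi^{-1}(\{a,b\})\cap\arrow$ that are absent from $F$, so $\Phi$ sends the jump $\{\pair{a,0},\pair{b,1}\}$ of $F$ bijectively onto the jump $\{\pair{\phi(a),0},\pair{\phi(b),1}\}$ of $\arrow$. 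The subspace topology on the closed set $F\subseteq\arrow$ coincides with its order topology (a compact Hausdorff topology cannot be properly refined by another Hausdorff topology), so $F$ is a compact linearly ordered space and the order-isomorphism $\Phi$ is automatically a homeomorphism. The main obstacle is the case analysis at gap endpoints of $K$ and at $\min K$, $\max K$, where one must verify that $\Phi$ is genuinely bijective; the left/right-accumulation characterization above makes this alignment transparent, but it still requires checking interior points of $K$, left- and right-endpoints of gaps, and the two global extrema separately.
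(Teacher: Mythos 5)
Your proposal is correct and follows essentially the same route as the paper: project $F$ via $\pi$, collapse the gaps/jumps of the image with a Cantor-function-style monotone map onto $[0,1]$, and lift back by preserving the second coordinate; your left/right-accumulation characterization of which fibre points lie in $F$ is just a sharper form of the paper's condition $(\star)$, and the paper likewise reduces the final step to checking an order isomorphism between linearly ordered compacta. One small slip in the last step: the fact you need is that a compact topology cannot \emph{properly refine} a Hausdorff one (not that a compact Hausdorff topology cannot be refined), which is exactly what identifies the subspace and order topologies on the closed set $F$.
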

\begin{proof}
Let $X\subset\arrow$ be closed and crowded. We will construct a homeomorphism $h:X\to\arrow$.

Since $X$ is compact, $T=\pi[X]$ is a compact subspace of $[0,1]$. Notice that since $X$ is crowded, $T$ is crowded as well. View $T$ as an ordered subspace of $[0,1]$ and consider the equivalence relation $\sim$ on $T$ obtained by defining $x\sim y$ if either $x=y$ or $[\min{\{x,y\}},\max{\{x,y\}}]$ is a jump in $T$. Since $T$ is crowded, each equivalence class of $\sim$ consists of at most two points. Since the equivalence classes are convex, the quotient $T/\sim$ can be linearly ordered in a natural way. Furthermore, it is easy to check that the order topology makes $T/\sim$ into a separable continuum. It follows that $T/\sim$ is order-isomorphic to $[0,1]$. Thus, there exists a continuous function $f:T\to[0,1]$ that is order preserving and the following property holds.

\begin{quote}
$(\ast)$ Fix $t\in[0,1]$. Then $|f\sp\leftarrow(t)|\neq 1$ if and only if $f\sp\leftarrow(t)=\{x,y\}$, where $0<x<y<1$ and $[x,y]$ is a jump in $T$. Moreover, if $f\sp\leftarrow(t)=\{x\}$, then $x$ is not in a jump of $T$.
\end{quote}

The function $f$ works just like the classical continuous function that maps the Cantor set onto $[0,1]$ by collapsing jumps (see \cite[Exercise 3.2.B]{eng}). The following is not hard to prove from the fact that $X$ is crowded.

\begin{quote}
$(\star)$ If $[x,y]$ is a jump in $T$, then $X\cap\{\pair{x,0},\pair{x,1}\}=\{\pair{x,0}\}$ and $X\cap\{\pair{y,0},\pair{y,1}\}=\{\pair{y,1}\}$.
\end{quote}

Define $h:X\to\arrow$ by $h(\pair{x,t})=\pair{f(x),t}$. We will prove that $h$ is a homeomorphism. Since both $X$ and $\arrow$ are linearly ordered topological spaces, it is enough to prove that $h$ is an order-isomorphism.

Let $\pair{x,t}<\pair{y,s}$ be two points in $X$. If $x=y$, then $0=t<s=1$ so $h(\pair{x,t})=\pair{f(x),t}<\pair{f(x),s}=h(\pair{y,s})$. If $f(x)<f(y)$, then also $h(\pair{x,t})<h(\pair{y,s})$. So assume that $x<y$ and $f(x)=f(y)$. By $(\ast)$, $[x,y]$ is a jump in $T$. Using $(\star)$ it is not hard to see that $t=0$ and $s=1$. Thus, $h(\pair{x,t})=\pair{f(x),0}<\pair{f(x),1}=h(\pair{y,s})$.

Finally, we show that $h$ is onto. Let $\pair{x,t}\in\arrow$. By $(\ast)$, there are two cases.

\vskip12pt
\noindent Case 1: There is a unique $y\in T$ with $x=f(y)$.
\vskip12pt

By $(\ast)$, $y$ is not in a jump. If $y\in\{0,1\}$, then clearly $\pair{y,1-y}\in T$ and $h(\pair{y,1-y})=\pair{x,t}$. Assume that $y\notin\{0,1\}$. Then for all $n<\omega$ there are $z_n\sp0,z_n\sp1\in T$ such that 
$$
y-\frac{1}{n+1}<z_n\sp0<y<z_n\sp1<y+\frac{1}{n+1}.
$$
For each $n<\omega$ and $i\in 2$, let $t_n\sp i\in 2$ be such that $\pair{z_n\sp i,t_n\sp i}\in X$. Notice that $\lim_{n\to\infty}{\pair{z_n\sp0,t_n\sp0}}=\pair{y,0}$ and $\lim_{n\to\infty}{\pair{z_n\sp0,t_n\sp0}}=\pair{y,1}$. By the compactness of $X$, $\{\pair{y,0},\pair{y,1}\}\subset X$. Thus, $\pair{y,t}\in X$ and $h(\pair{y,t})=\pair{x,t}$.

\vskip12pt
\noindent Case 2: There are $y_0,y_1\in T$ with $y_0<y_1$ and $x=f(y_0)=f(y_1)$.
\vskip12pt

By $(\ast)$, $[y_0,y_1]$ is a jump in $T$. By $(\star)$, we obtain that $\pair{y_i,i}\in X$ for $i\in2$. Thus, $\pair{y_t,t}\in X$ and $h(\pair{y_t,t})=\pair{x,t}$.

This concludes the proof that $h$ is the homeomorphism that we were looking for.
\end{proof}

\begin{coro}
Any compact and metrizable subset of $\arrow$ is scattered, thus, countable. In particular, $\arrow$ does not contain topological copies of $\csf$.
\end{coro}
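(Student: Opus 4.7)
The plan is to leverage Proposition \ref{closedinarrow} together with the fact that $\arrow$ is not metrizable (it is separable but of weight $\C$, hence not second countable).

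First I would let $K\subset\arrow$ be compact and metrizable and argue by contradiction that $K$ must be scattered. Suppose $K$ is not scattered; then the Cantor--Bendixson process yields a non-empty perfect kernel $P\subset K$, which is closed in $K$ and crowded. Since $K$ is closed in $\arrow$ (being compact in a Hausdorff space), $P$ is also closed in $\arrow$. By Proposition \ref{closedinarrow}, $P$ is homeomorphic to $\arrow$. On the other hand, $P$ inherits metrizability from $K$, and being separable it would be second countable, which contradicts the fact that $\arrow$ has weight $\C$.

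Having established that $K$ is scattered, I would invoke the classical fact that a compact, metrizable, scattered space is countable (for instance, every uncountable compact metrizable space contains a Cantor set, and a space containing a Cantor set is not scattered). This gives the first assertion.

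For the ``in particular'' clause, I would simply note that $\csf$ is compact, metrizable, and uncountable, so by the first assertion it cannot embed as a (necessarily closed) subspace of $\arrow$. The main conceptual point is the appeal to Proposition \ref{closedinarrow}; once that is in hand the argument is essentially bookkeeping, and the only small subtlety is remembering that a perfect kernel of a metrizable compactum is itself compact, crowded, and closed in the ambient $\arrow$.
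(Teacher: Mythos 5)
Your argument is correct and is exactly the intended derivation: the paper states this as an immediate corollary of Proposition \ref{closedinarrow} without proof, and passing to the perfect kernel, applying the proposition, and contradicting the weight of $\arrow$ is the natural (and surely intended) route. The remaining steps (compact metrizable scattered implies countable; $\csf$ is compact, metrizable and uncountable) are standard and correctly handled.
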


The following result is a general theorem that places restrictions on CDH spaces that are products. Answers to the questions on $\arrow$ will follow from it.

\begin{thm}\label{main}
Let $X$ and $Y$ be two crowded spaces of countable $\pi$-weight. If $X\times Y$ is CDH, then $X$ contains a subset homeomorphic to $\csf$ if and only if $Y$ contains a subset homeomorphic to $\csf$.
\end{thm}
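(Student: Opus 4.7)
The plan is to prove, by the symmetry of the statement, a single direction: if $X$ contains a copy $K$ of $\csf$ and $Y$ does not, then $X \times Y$ is not CDH. I will build two countable dense subsets $D_0, D_1 \subseteq X \times Y$ that no self-homeomorphism of the product can interchange.

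The structural heart of the argument will be the following claim: every copy $C$ of $\csf$ in $X \times Y$ contains a non-empty clopen sub-copy of $\csf$ that lies inside a single vertical fiber $X \times \{y\}$. To prove it, I first observe that $\pi_Y[C]$ is a compact Hausdorff continuous image of $\csf$, hence metrizable by the classical theorem that such images of compact metric spaces are metrizable. Since $\pi_Y[C] \subseteq Y$ contains no copy of $\csf$, and any uncountable compact metric space contains a copy of $\csf$, the set $\pi_Y[C]$ must be countable. Writing $C = \bigcup\{C \cap (X \times \{y\}) : y \in \pi_Y[C]\}$ as a countable union of closed subsets of the Cantor set $C$, the Baire category theorem yields some $y^*$ for which $C \cap (X \times \{y^*\})$ has non-empty interior in $C$. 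Inside that interior I pick a non-empty clopen subset $V$ of $C$, which is a copy of $\csf$ (compact, metric, $0$-dimensional and crowded) lying entirely in $X \times \{y^*\}$.

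With the structural claim in hand, I separate $D_0$ and $D_1$ using the homeomorphism-invariant property (P): there exists a copy $C$ of $\csf$ in $X \times Y$ with $D \cap C$ dense in $C$. For $D_0$, I fix $y_0 \in Y$, pick a countable dense $E_0 \subseteq K \times \{y_0\}$, and enlarge $E_0$ to a countable dense $D_0 \subseteq X \times Y$ using countable $\pi$-weight of the product; then $D_0 \cap (K \times \{y_0\}) \supseteq E_0$ is dense in the Cantor set $K \times \{y_0\}$, so (P) holds. For $D_1$, I enumerate a countable $\pi$-base $\{W_n : n < \omega\}$ of $X \times Y$ and recursively choose $(x_n, y_n) \in W_n$ with all the $y_n$ pairwise distinct: this is possible because $\pi_Y[W_n]$ is a non-empty open subset of the crowded Hausdorff space $Y$ and is therefore infinite. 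The resulting $D_1$ is dense in $X \times Y$ and meets every vertical fiber $X \times \{y\}$ in at most one point.

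To see that $D_1$ violates (P), suppose $D_1 \cap C$ were dense in some copy $C$ of $\csf$. The structural claim supplies a clopen sub-copy $V \subseteq X \times \{y^*\}$ of $\csf$ inside $C$; density transfers from $C$ to the clopen $V$, so $D_1 \cap V$ would be dense in $V$, yet $D_1 \cap V$ has at most one element by the fiber constraint, contradicting $V \cong \csf$. Since (P) is clearly invariant under self-homeomorphisms of $X \times Y$, no such homeomorphism sends $D_0$ to $D_1$, and $X \times Y$ is not CDH. The only step I foresee requiring real care is the metrizability of $\pi_Y[C]$ in the structural claim; granted that, the rest reduces to standard facts about Cantor sets, countable $\pi$-weight, and the Baire category theorem.
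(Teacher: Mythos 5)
Your proof is correct and follows essentially the same route as the paper: the same two countable dense sets (one containing a dense subset of a copy of $\csf$, one on which $\pi_Y$ is injective, built by the same recursive choice along a countable $\pi$-base), with the contradiction in both cases coming from a nonempty clopen piece of a crowded compact set trapped in a single $\pi_Y$-fiber. The only cosmetic difference is how that fiber is located: you show $\pi_Y[C]$ is countable and apply the Baire category theorem inside $C$, while the paper shows $\pi_Y[K]$ is scattered and takes the preimage of an isolated point.
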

\begin{proof}
Assume that $X$ contains a subspace homeomorphic to the Cantor set and $Y$ does not, we shall arrive to a contradiction. Since $X\times Y$ contains a Cantor set, there is a countable dense subset $D\subset X\times Y$ and $Q\subset D$ such that $\cl[X\times Y]{Q}\approx\csf$. We shall construct a countable dense subset $E\subset X\times Y$ that does not have this property.

Let $\B=\{U_n\times V_n:n<\omega\}$ be a $\pi$-base of the product $X\times Y$, where $U_n$ is open in $X$ and $V_n$ is open in $Y$ for each $n<\omega$. Let $\pi_X:X\times Y\to X$ and $\pi_Y:X\times Y\to Y$ be the projections. Recursively, choose $\{e_n:n<\omega\}\subset X\times Y$ such that $\pi_X(e_n)\in U_n$ and
$$
\pi_Y(e_n)\in V_n\setminus\{\pi_Y(e_0),\ldots,\pi_Y(e_{n-1})\}
$$
for all $n<\omega$. Let $E=\{e_n:n<\omega\}$. Thus, $\pi_Y\!\!\!\restriction_{E}:E\to Y$ is one-to-one.

Assume that there is some autohomeomorphism of $X\times Y$ that takes $D$ to $E$. Then there exists $R\subset E$ such that $K=\cl[X\times Y]{R}$ is homeomorphic to $\csf$. Notice that $T=\pi_Y[K]$ is a compact subset of $Y$ of countable weight. Since $Y$ does not contain topological copies of the Cantor set, $T$ is scattered. Then $T$ contains an isolated point $p$. Since $(X\times\{p\})\cap K$ is a clopen subset of $K$, $\pi_Y\restriction_R:R\to Y$ is one to one and $R$ is dense in $K$, we obtain that $(X\times\{p\})\cap K$ is a singleton. But $K$ is crowded so this is a contradiction. Thus, the theorem follows.
\end{proof}

We immediately obtain the following. This answers Question 3.3 of \cite{arh-vm-cdh-cardinality}.

\begin{coro}
$\arrow\times\csf$ is not CDH.
\end{coro}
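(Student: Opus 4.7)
The plan is simply to apply Theorem \ref{main} in contrapositive form with $X=\arrow$ and $Y=\csf$. Once I verify the hypotheses on the factors and exhibit the asymmetry between them, the conclusion is immediate.

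First I would check the hypotheses. Both factors are clearly crowded, and $\csf$ is second countable so has countable $\pi$-weight automatically. For $\arrow$ I would exhibit a concrete countable $\pi$-base, namely the family of open intervals of the form $(\pair{q,1},\pair{r,0})$ with $q,r\in\Q\cap(0,1)$ and $q<r$. Any non-empty open subset of $\arrow$ contains a basic order-interval $(\pair{a,s},\pair{b,t})$ with $a<b$ (the case $a=b$ would force the interval to be empty), and choosing rationals strictly between $a$ and $b$ produces a member of the family inside it.

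Next I would exhibit the asymmetric witness required by Theorem \ref{main}. The factor $\csf$ trivially contains a topological copy of $\csf$, while the corollary to Proposition \ref{closedinarrow} states that every compact metrizable subset of $\arrow$ is scattered and hence countable, so $\arrow$ contains no copy of $\csf$. Therefore the biconditional in Theorem \ref{main} fails for this pair, and by the contrapositive $\arrow\times\csf$ is not CDH. There is no real obstacle here; the substantive work lies in Proposition \ref{closedinarrow} (ruling out Cantor sets in $\arrow$) and in Theorem \ref{main} itself, and this corollary is simply the clean application that combines them.
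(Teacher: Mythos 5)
Your proposal is correct and is exactly the argument the paper intends: apply Theorem \ref{main} to the crowded, countable $\pi$-weight factors $\arrow$ and $\csf$, using the corollary to Proposition \ref{closedinarrow} to see that $\arrow$ contains no copy of $\csf$ while $\csf$ trivially does. The explicit rational-endpoint $\pi$-base for $\arrow$ is a nice touch but the approach is the same.
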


The following corollary gives a necessary condition on $X$ for ${}\sp\omega X$ to be CDH. It is the first criterion of this kind that works for non-metrizable spaces. See also the discussion in Example \ref{example}(c). This contrasts with the fact that if $X$ is first-countable and $0$-dimensional, then ${}\sp\omega X$ is homogeneous (see \cite{dow-pearl}).

\begin{coro}\label{coropower}
Let $Z$ be a crowded space of countable $\pi$-weight. If ${}\sp\omega Z$ is CDH, then $Z$ contains a subspace homeomorphic to $\csf$.
\end{coro}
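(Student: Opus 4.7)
The plan is to reduce the statement to Theorem \ref{main} by taking $X=Z$ and $Y={}\sp\omega Z$, and then observe that ${}\sp\omega Z$ already contains a copy of $\csf$ for trivial reasons; the contrapositive of the main theorem then forces $Z$ to contain one too.

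First I would check that the hypotheses of Theorem \ref{main} apply to the pair $(Z,{}\sp\omega Z)$. Crowdedness of both is clear: $Z$ is crowded by assumption, and any product of a crowded space with anything non-empty is crowded. For countable $\pi$-weight of ${}\sp\omega Z$, I would take a countable $\pi$-base $\B=\{U_m:m<\omega\}$ of $Z$ and verify that the collection
\[
\Bigl\{\prod_{i<n} U_{m_i}\times\prod_{i\geq n} Z: n<\omega,\ m_0,\ldots,m_{n-1}<\omega\Bigr\}
\]
is a countable $\pi$-base of ${}\sp\omega Z$, using that non-empty basic open sets in the product restrict only finitely many coordinates. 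Finally, the shift map $f\mapsto\pair{f(0),f\circ\mathrm{succ}}$ is a homeomorphism from ${}\sp\omega Z$ onto $Z\times {}\sp\omega Z$, so $Z\times {}\sp\omega Z$ is CDH as soon as ${}\sp\omega Z$ is.

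Second, I would exhibit a copy of $\csf$ inside ${}\sp\omega Z$. Since $Z$ is crowded and Hausdorff it has at least two points $a\neq b$, and a Hausdorff two-point space is discrete, so $\{a,b\}\subset Z$ is a two-point discrete subspace. Then $\prod_{n<\omega}\{a,b\}$, as a subspace of ${}\sp\omega Z$ with the product topology, is a product of countably many two-point discrete spaces and is therefore homeomorphic to $\csf$. Applying Theorem \ref{main} to $X=Z$ and $Y={}\sp\omega Z$ now yields that $Z$ itself must contain a topological copy of $\csf$.

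I do not anticipate a significant obstacle: the whole content has been packaged into Theorem \ref{main}, and all that remains is the small bookkeeping above. The only minor point requiring care is the verification of countable $\pi$-weight for ${}\sp\omega Z$, which is standard but uses genuinely that $\pi$-weight (not weight) is what we are preserving, since $Z$ need not be second countable.
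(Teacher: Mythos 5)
Your proposal is correct and follows exactly the paper's own argument: take $X=Z$, $Y={}\sp\omega Z$ in Theorem \ref{main}, note that ${}\sp\omega Z\approx Z\times{}\sp\omega Z$ contains a Cantor set because $Z$ has at least two points, and conclude that $Z$ must contain one as well. The extra verifications you supply (countable $\pi$-weight of the power, the shift homeomorphism) are the routine details the paper leaves implicit.
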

\begin{proof}
Assume that $Z$ contains no subspace homeomorphic to $\csf$. Let $X=Z$ and $Y={}\sp\omega Z$. Then both $X$ and $Y$ are crowded spaces of countable $\pi$-weight. Notice that $Z$ has at least two points so $Y$ contains a topological copy of $\csf$. By Theorem \ref{main}, we obtain a contradiction.
\end{proof}

\begin{exs}\label{example}
$\empty$
\end{exs}
\begin{ex2}
\noindent\!\! $(a)$ If $\Q$ is the space of rational numbers, ${}\sp\omega\Q$ is not CDH, this was first shown by Fitzpatrick and Zhou (\cite[Corollary to Theorem 3.5]{fitz-zhou-cdhbaire}).\vskip12pt
\noindent\!\! $(b)$ If $\sorg$ is the Sorgenfrey line, using back-and-forth it is easy to prove that $\sorg$ is CDH. However, ${}\sp\omega\sorg$ is not CDH.\vskip12pt
\noindent\!\! $(c)$ It is an open question whether there exists a ZFC example of a $0$-dimensional separable metrizable space $X$ such that ${}\sp\omega X$ is CDH while $X$ is not completely metrizable (see \cite[Question 3.2]{cdhdefinable}). In \cite[Theorem 3.1]{cdhdefinable} it was shown that such an $X$ must be a Baire space and not Borel. A natural candidate is a Bernstein set, as it is known that MA implies the existence of a CDH Bernstein set (\cite[Theorem 3.5]{CDH_MA}). However, by Corollary \ref{coropower}, such an $X$ cannot be a Bernstein set. In fact, the only (consistently) known non-Borel spaces $X$ such that ${}\sp\omega X$ is CDH are filters (see \cite{medinimilovich} and \cite{cdhnonmeagerPfilter}), which do contain topological copies of the Cantor set.\vskip12pt
\noindent\!\! $(d)$ Finally, ${}\sp\omega\arrow$ is not CDH, as announced.
\end{ex2}

Notice that Theorem \ref{main} applies to first-countable separable spaces. This observation is important because, by \cite[Corollary 2.5]{arh-vm-cdh-cardinality}, it is consistent that all compact CDH spaces are first-countable.

\section{Number of dense subsets}

Recently, in \cite{hrusak-vm-cdh}, M. Hru\v s\'ak and J. van Mill have studied the topological types of countable dense subsets of separable metrizable spaces. If $X$ is a separable space and $\kappa$ a cardinal number, we will say that $X$ has $\kappa$ \emph{types of countable dense subsets} if $\kappa$ is the minimum cardinal number such that there is a collection $\D$ of countable dense subsets of $X$ such that $|\D|=\kappa$ and for any countable dense subset $E\subset X$ there is $D\in\D$ and a homeomorphism $h:X\to X$ such that $h[D]=E$.

In \cite{hrusak-vm-cdh} it is proved, among other results, that a locally compact separable metrizable space that is homogeneous and not CDH has uncountably many types of countable dense subsets. In this section, we will prove that $\arrow$ has $\C$ types of countable dense subsets. 

Autohomeomorphisms of $\arrow$ behave in a very simple manner, as the following shows. We do not include the proof since it follows directly from the arguments given in Lemma 3.1 and Theorem 3.2 from \cite{arh-vm-cdh-cardinality}.
 
\begin{propo}\label{homeomorphisms}
Let $h:\arrow\to\arrow$ be a homeomorphism. Then there exists a collection $\U$ of pairwise disjoint clopen intervals of $\arrow$ such that $\bigcup\U$ is dense in $\arrow$ and for every $J\in\U$, $h\restriction_{J}:J\to\arrow$ is either increasing or decreasing.
\end{propo}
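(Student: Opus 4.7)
The plan is to combine a local monotonicity analysis with a Zorn's lemma argument to assemble the pairwise disjoint family $\U$.

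The first step would be to establish a local monotonicity lemma: for every non-empty clopen interval $J\subseteq\arrow$, there is a clopen subinterval $J'\subseteq J$ such that $h\restriction_{J'}:J'\to\arrow$ is order-preserving or order-reversing. Following the line of Lemma~3.1 and Theorem~3.2 of \cite{arh-vm-cdh-cardinality}, the idea is to analyze how $h$ interacts with the projection $\pi:\arrow\to[0,1]$. For each $x\in(0,1)$, the fiber $\pi\sp\leftarrow(x)=\{\pair{x,0},\pair{x,1}\}$ consists of two order-adjacent points, with $\pair{x,0}$ approached only from the left and $\pair{x,1}$ only from the right in the order topology. A homeomorphism $h$ locally either preserves this left/right dichotomy (yielding locally increasing behavior) or swaps it (yielding locally decreasing behavior), and the set of boundary points where the orientation switches is forced to be topologically discrete by a continuity argument applied to convergent sequences $\pair{x_n,t_n}\to\pair{x,t}$. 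Since by Proposition~\ref{closedinarrow} every clopen crowded $J\subseteq\arrow$ is itself homeomorphic to $\arrow$, the same analysis applied to $h\restriction_J$ produces the desired $J'$.

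Once the local monotonicity lemma is in hand, the second step is a standard maximality argument. Take a pairwise disjoint family $\U$ of clopen intervals of $\arrow$ on each of which $h$ is monotonic that is maximal under inclusion; such a family exists by Zorn's lemma. If $\bigcup\U$ were not dense in $\arrow$, then $\arrow\setminus\cl[\arrow]{\bigcup\U}$ would be a non-empty open set, hence contain some non-empty clopen interval $J_0$. Applying the local monotonicity lemma to $J_0$ gives a clopen subinterval $J_0'\subseteq J_0$ on which $h$ is monotonic, so $\U\cup\{J_0'\}$ is a strictly larger pairwise disjoint family with the required property, contradicting the maximality of $\U$. Hence $\bigcup\U$ is dense.

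The principal obstacle is the local monotonicity lemma itself. The subtlety is that $h$ need not globally respect the pair structure $\{\pair{x,0},\pair{x,1}\}$, since $h$ can re-align fibers of $\pi$ at certain boundary points (indeed, if $h$ permutes two clopen intervals by reversing one of them, then precisely the two endpoints of the cut are not paired correctly by $h$). The hard part is showing that such boundary points form a topologically small set, so that any prescribed clopen interval contains a clopen subinterval on which $h$ is consistently monotonic.
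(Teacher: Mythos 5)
Your globalization step is fine: clopen intervals of the form $[\pair{a,1},\pair{b,0}]$ form a $\pi$-base of $\arrow$, so a maximal pairwise disjoint family of clopen intervals on which $h$ is monotone must have dense union, exactly as you argue. But this reduces the proposition to your ``local monotonicity lemma,'' which is the entire content of the statement, and that lemma is left unproved. (The paper does not reprove it either; it explicitly defers to Lemma~3.1 and Theorem~3.2 of \cite{arh-vm-cdh-cardinality}, which is where all the work lives.) The heuristic you offer for the lemma has two concrete defects. First, the claim that the set of points where the orientation switches is ``forced to be topologically discrete'' is false: let $\{J_n:n<\omega\}$ be the clopen intervals of $\arrow$ sitting over the complementary intervals of the middle-thirds Cantor set, let $h$ reverse the order on $J_n$ for $n$ in an index set whose intervals accumulate at every point of the Cantor set, fix the remaining $J_n$ pointwise, and extend by the identity; this is a homeomorphism of $\arrow$ whose orientation switches accumulate at every point of a crowded, nowhere dense closed set. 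Second, and more seriously, even if you knew that $h$ maps every jump $\{\pair{x,0},\pair{x,1}\}$ inside some clopen interval $J$ to a jump with consistently preserved orientation, that would not make $h$ monotone on $J$: the homeomorphism transposing the two halves $[\pair{0,1},\pair{1/2,0}]$ and $[\pair{1/2,1},\pair{1,0}]$ by translation preserves every jump and its orientation yet is monotone on no interval containing $\pair{1/2,0}$ and $\pair{1/2,1}$. So the inference at the heart of your sketch is a non sequitur.

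What is actually needed is, first, a proof that all but countably many jumps are sent to jumps; this can be done by noting that a clopen subset of $\arrow$ is a finite union of clopen intervals and hence separates only finitely many jumps, whereas if uncountably many jumps were sent to non-jumps, some single clopen set $h\sp{\leftarrow}\big[[\pair{0,1},\pair{q,0}]\big]$ with $q$ rational would have to split uncountably many of them. Second, one needs that the induced injection $g(x)=\pi(h(\pair{x,0}))$, defined on a co-countable subset of $(0,1)$, is monotone on a subinterval of every interval. This is not automatic from continuity and injectivity of $g$ alone (the irrationals admit nowhere monotone self-homeomorphisms); it requires exploiting that $h$ and $h\sp{-1}$ are continuous for the Sorgenfrey-type topologies of $\arrow_0$ and $\arrow_1$, which is precisely the content of the cited arguments of Arhangel'ski\u \i{} and van Mill. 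Your proposal identifies the right reduction but leaves this core step open, and the mechanism proposed for it would not close the gap.
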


We also need the following result. The first proof of this fact was apparently given by Stefan Mazurkiewicz and Wac{\l}aw Sierpi\'nski in \cite{mazurkiewicz-sierpinski}. A different proof was given by Brian, van Mill and Suabedissen in \cite[Lemma 14]{brian-vanmill-suabedissen}. 

\begin{lemma}\label{countablesubsets}
The number of distinct homeomorphism classes of countable metrizable spaces is $\C$.
\end{lemma}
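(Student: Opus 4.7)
The plan is to establish both inequalities $\leq \mathfrak{c}$ and $\geq \mathfrak{c}$. For the upper bound, I would note that every countable metric space is isometric to $(D,d)$ for some $D\subseteq\omega$ and metric $d\colon D\times D\to[0,\infty)$. The collection of such $d$ injects into $\mathbb{R}^{\omega\times\omega}$, which has cardinality $\mathfrak{c}^{\aleph_{0}}=\mathfrak{c}$, so there are at most $\mathfrak{c}$ countable metric spaces up to isometry, and hence at most $\mathfrak{c}$ homeomorphism classes.

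For the lower bound I would construct $\mathfrak{c}$ pairwise non-homeomorphic countable metric spaces by encoding an arbitrary $s\in 2^{\omega}$ into the Cantor--Bendixson structure of a countable scattered space $X_s$. The blueprint is to equip $X_s$ with a rigid backbone of topologically distinguishable points, each marked by a distinct Cantor--Bendixson rank (realised by attaching appropriate ordinal pieces $\omega^{\alpha}+1$), together with a local decoration at each backbone point whose topological type encodes the bit $s(n)$. Since the backbone points are individually identified by their CB ranks, any homeomorphism $X_{s}\to X_{t}$ would have to match them up in the unique way forced by the ranks, and then the local decorations force $s(n)=t(n)$ for every $n$, giving $s=t$.

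The hard part will be arranging matters so that absorption phenomena do not destroy the encoding. A naive disjoint union of the pieces fails for this reason: for instance $(\omega+1)\sqcup(\omega^{2}+1)\cong\omega^{2}+1$, so smaller pieces get absorbed into larger ones and the multiset of piece types is not a topological invariant. I would therefore tie the pieces together through a rigid global structure, for example a common limit point together with carefully chosen non-clopen attachments, forcing each backbone point to be a topological fixed point whose decoration is locally readable. This rigidification is precisely what is done in Lemma~14 of \cite{brian-vanmill-suabedissen} (or in the original proof of \cite{mazurkiewicz-sierpinski}), whose argument I would follow in detail. Since $\left|2^{\omega}\right|=\mathfrak{c}$, combining this with the upper bound yields exactly $\mathfrak{c}$ homeomorphism classes.
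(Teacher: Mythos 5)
The paper gives no proof of this lemma at all, deferring entirely to Mazurkiewicz--Sierpi\'nski and to Lemma 14 of Brian--van Mill--Suabedissen; your proposal does exactly the same for the substantive lower bound, while correctly supplying the routine upper bound ($\C^{\aleph_0}=\C$ metrics on subsets of $\omega$) and correctly flagging the absorption pitfall, e.g.\ $(\omega+1)\sqcup(\omega^2+1)\cong\omega^2+1$. Since the hard construction is delegated to the very sources the paper cites, your treatment is correct and essentially the same as the paper's.
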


In \cite[Theorem 3.2]{arh-vm-cdh-cardinality} two different types of countable dense subsets of $\arrow$ were found. We will manipulate those two dense subsets and get $\C$ different ones.

\begin{thm}\label{arrowtypes}
$\arrow$ has $\C$ types of countable dense subsets.
\end{thm}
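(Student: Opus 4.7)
The plan is to prove a matching upper and lower bound of $\C$. For the upper bound, since $|\arrow|=\C$, there are at most $\C^{\aleph_0}=\C$ countable subsets of $\arrow$, hence at most $\C$ types. For the lower bound, I would parametrize $\C$ pairwise non-equivalent countable dense subsets by countable metric spaces, using Lemma~\ref{countablesubsets} ($\C$-many up to homeomorphism). For each countable metric space $M$, embed $M$ as $E_M\subset\Q\cap(0,1)$ (using that every countable metric space embeds in $\Q$) and set
\[
D_M=\{\pair{q,0}:q\in\Q\cap(0,1]\}\cup\{\pair{q,1}:q\in(\Q\cap[0,1))\setminus E_M\}\cup\{\pair{0,1}\}.
\]
Then $D_M$ is countable and dense in $\arrow$, and its \emph{unpaired set} $D_M^\ast:=\{x\in(0,1):|\pi^{\leftarrow}(x)\cap D_M|=1\}$ equals $E_M$, realizing $M$ topologically as a subspace of $(0,1)$.

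For invariance: suppose $h$ is an autohomeomorphism of $\arrow$ with $h[D_M]=D_N$. By Proposition~\ref{homeomorphisms}, $h$ is piecewise monotone on a countable disjoint family $\{J_n\}$ of clopen intervals whose union is dense in $\arrow$. Off the countable set $B$ of boundary $x$-coordinates at which pairs $\{\pair{x,0},\pair{x,1}\}$ are split between two different pieces, $h$ preserves fibers: it sends the full fiber $\pi^\leftarrow(x)$ inside a single $J_n$ to another fiber $\pi^\leftarrow(y)$ inside $h(J_n)$. The assignment $x\mapsto y$ therefore defines a map $g:[0,1]\setminus B\to[0,1]$ that is a homeomorphism on each $\pi(J_n)$, and it restricts to a bijection between $D_M^\ast\setminus B$ and $D_N^\ast\setminus g[B]$. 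Combining this piecewise-homeomorphic identification with a Cantor--Bendixson analysis of the closure $\overline{D^\ast}\subset[0,1]$ should extract a topological invariant strong enough to recover $M$ up to homeomorphism, giving the $\C$ distinct types.

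The main obstacle is to make the invariance argument precise despite the countable exception set $B$: since different autohomeomorphisms produce different $B$'s, one must verify that a sufficiently robust invariant (the homeomorphism type of $\overline{D^\ast}$, or of its Cantor--Bendixson derivatives, viewed as a compact metrizable subspace of $[0,1]$) survives the countable symmetric differences at boundary points, and that by a careful choice of the embeddings $\iota_M:M\hookrightarrow\Q$ the resulting family still realizes $\C$ distinct invariants. This is where the technical content of the theorem lies; everything else is a bookkeeping of how Proposition~\ref{homeomorphisms} interacts with the concrete signature $D^\ast$.
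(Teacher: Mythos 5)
Your upper bound and your use of Lemma~\ref{countablesubsets} to parametrize candidates are fine, but the heart of the theorem --- the invariance argument --- is missing, and the specific signature you chose cannot support it. The set $D_M^\ast$ that is supposed to encode $M$ is countable, while the exceptional set $B$ of fibers that $h$ fails to preserve is also countable but otherwise uncontrolled: Proposition~\ref{homeomorphisms} only gives monotonicity on countably many clopen pieces, and a fiber can be broken whenever it is split between two pieces or falls at one of the finitely many gaps of $h[J_n]$. Nothing forces $B\cap D_M^\ast$ to be small, so your ``bijection between $D_M^\ast\setminus B$ and $D_N^\ast\setminus g[B]$'' may be a bijection between empty sets, and no invariant extracted from $D^\ast$ --- its homeomorphism type, the type of $\overline{D^\ast}$, or its Cantor--Bendixson derivatives --- survives a countable perturbation of a set that is itself countable. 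You correctly flag this as ``the main obstacle,'' but it is not a technicality to be patched later: it is the reason this choice of signature fails, and it is precisely the difficulty the actual proof is engineered to avoid.

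The paper's proof places the distinguishing data on an uncountable closed set that is preserved \emph{exactly}, not merely off a countable set. It takes $X\subset\arrow$ to be the double arrow over the middle-thirds Cantor set (closed, crowded, nowhere dense, hence by Proposition~\ref{closedinarrow} a copy of $\arrow$), puts the $\C$ pairwise non-homeomorphic countable sets $C_\alpha$ inside $X$, and fills the complementary intervals $J_n$ with one fixed, deliberately asymmetric countable dense set $D$: jump pairs of $D$ occur densely in the left half $J_n^0$ of each gap but never in the right half $J_n^1$. If a homeomorphism carrying $D\cup C_\beta$ to $D\cup C_\gamma$ moved any point of some $J_k^i$ into $X$, continuity would force it to map a small clopen interval of $J_k^i$ order-isomorphically into some $J_m^{1-i}$, transporting a jump pair of $D$ to where none can exist; hence $h[X]=X$ on the nose, so $h[C_\beta]=C_\gamma$, contradicting the choice of the $C_\alpha$. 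To salvage your approach you would need an analogous rigidity device --- an uncountable, homeomorphism-invariant carrier for the countable signatures --- rather than a signature that is itself countable and therefore at the mercy of the countably many broken fibers.
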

\begin{proof}
Since $|\arrow|=\C$, it is enough to find $\C$ countable dense subsets of $\arrow$ that are different with respect to autohomeomorphisms of $\arrow$. 

The classical middle-thirds Cantor set in $[0,1]$ is the complement of a union of countably many open intervals $\{(x_n,y_n):n<\omega\}$. For each $n<\omega$, let $J_n=[\pair{x_n,1},\pair{y_n,0}]$, this is a clopen subinterval of $\arrow$. Define 
$$
X=\arrow\setminus\bigcup\{J_n:n<\omega\}.
$$

Then $X$ is a closed, crowded and nowhere dense subset of $\arrow$. Since $X$ is also first-countable and regular, every countable dense subset of $X$ is metrizable and crowded, hence homeomorphic to the space $\mathbb{Q}$ of rational numbers. Since $\mathbb{Q}$ is universal for countable metrizable spaces (see \cite[Exercise 4.3.H(b)]{eng}), by Lemma \ref{countablesubsets} there is a collection $\{C_\alpha:\alpha<\C\}$ of countable subspaces of $X$ that are pairwise non-homeomorphic.

Let $Q_0$, $Q_1$ be two disjoint countable dense subsets of $\bigcup\{(x_n,y_n):n<\omega\}$ (with the Euclidean topology). For each $n<\omega$, choose $z_n\in (x_n,y_n)$ and define the following sets: $K_n\sp0=(x_n,z_n)$, $K_n\sp1=(z_n,y_n)$, $J_n\sp0=[\pair{x_n,1},\pair{z_n,0}]$ and $J_n\sp1=[\pair{z_n,1},\pair{y_n,0}]$. Let $D$ be the subset of $\arrow\setminus X$ defined such that

\begin{quote}
$(\ast)$ if $n<\omega$ and $i,j\in 2$, then $\pi[J_n\sp{i}\cap\arrow_j\cap D]=Q_{i\cdot j}\cap K_n\sp{j}$.
\end{quote}

Notice that $D$ is then a countable dense subset of $\arrow\setminus X$ and thus, of $\arrow$. For each $\alpha<\C$, let $D_\alpha=D\cup C_\alpha$. Then $D_\alpha$ is a countable dense subset of $\arrow$. Assume that there are $\beta<\gamma<\C$ and a homeomorphism $h:\arrow\to\arrow$ such that $h[D_\beta]=D_\gamma$, we shall arrive to a contradiction.

\vskip6pt
\noindent Claim: $h[X]=X$.
\vskip6pt

We shall prove the claim proceeding by contradiction. Without loss of generality, we can assume that there are $k<\omega$, $i\in 2$ and $x\in J_k\sp{i}$ such that $h(x)\in X$. By the definition of the middle-thirds Cantor set, it is easy to see that every neighborhood of a point of $X$ contains some interval from $\{J_n:n<\omega\}$. Thus, by continuity there exist $m<\omega$ and clopen intervals $I_0$ and $I_1$ such that $I_0\subset J_k\sp{i}$, $I_1\subset J_m\sp{1-i}$ and $h[I_0]\subset I_1$.

By Proposition \ref{homeomorphisms}, we may assume that $h$ is increasing or decreasing on $I_0$. Let us consider the case when $h$ is increasing on $I_0$, the other case is treated similarly. Compactness implies that every clopen subset of $\arrow$ is a finite union of clopen intervals. Thus, $h[I_0]$ can be written as the union of finitely many clopen intervals $[a_0,b_0]\cup\ldots\cup[a_t,b_t]$. Since $h$ is increasing in $I_0$, $[h\sp{-1}(a_0),h\sp{-1}(b_0)]$ is a subinterval of $I_0$. Thus, we may assume that $h[I_0]=I_1$. In particular, $h\!\!\restriction_{I_0}:I_0\to I_1$ is an order isomorphism.

Assume that $i=0$, the other case is entirely symmetric. Since $I_0\subset J_k\sp0$, by $(\ast)$, there are $p,q\in I_0\cap D$ such that $[p,q]$ is a jump in $\arrow$. Thus, $[h(p),h(q)]$ is a jump in $\arrow$ such that $h(p),h(q)\in I_1\cap D$. But then $h(p),h(q)\in J_m\sp1\cap D$ and $\pi(h(p))=\pi(h(q))$. This contradicts $(\ast)$ so we have proved the claim.

Thus, given that $D_\alpha\cap X=C_\alpha$ for all $\alpha<\C$, we obtain that $h[C_\beta]=C_\gamma$ from the claim. But this contradicts the choice of the family $\{C_\alpha:\alpha<\C\}$. Thus, there can be no such homeomorphism $h$ and we have finished the proof.
\end{proof}

\section{Other properties of $\arrow$}

There are some interesting properties of $\arrow$ that the author discovered while trying to prove the results in this paper. These results are not directly related to the main topic of the paper. As suggested by the referee, we include the proofs of these results but encourage the reader to treat them as exercises.

\begin{propo}
Let $C\subset\arrow_0$ be compact and scattered. Then there is a homeomorphism $h:\arrow\to\arrow$ such that $h[\arrow_0\setminus C]=\arrow_0$.
\end{propo}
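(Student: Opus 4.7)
The plan is to induct on the Cantor-Bendixson rank of $C$, which is well-defined because $C$, being compact, scattered and metrizable, is countable. For $C=\emptyset$ take $h=\mathrm{id}$. For the inductive step I will produce a homeomorphism $h_{1}:\arrow\to\arrow$ that moves every isolated point of $C$ into $\arrow_{1}$ while fixing $C'=C\setminus\mathrm{iso}(C)$, and then apply the inductive hypothesis to the compact scattered set $C'$ (which has strictly smaller CB rank) to obtain $h_{2}$ with $h_{2}[\arrow_{0}\setminus C']=\arrow_{0}$; the composition $h=h_{2}\circ h_{1}$ will be the required homeomorphism, since $h_{1}$ leaves $C'$ pointwise fixed.

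The core technical step is a \emph{local shift} handling a single isolated point $c=\langle a,0\rangle\in C$ inside a clopen neighborhood $N\subset\arrow$ with $N\cap C=\{c\}$. I choose a target $\langle b,1\rangle\in\arrow_{1}\cap N$ and sequences $a_{n}\nearrow a$, $b_{n}\searrow b$ with all $a_{n},b_{n}\in\pi[N]\setminus\pi[C]$. Form the disjoint clopen intervals
\[
J_{n}=[\langle a_{n},1\rangle,\langle a_{n+1},0\rangle],\qquad J_{n}^{*}=[\langle b_{n+1},1\rangle,\langle b_{n},0\rangle]
\]
inside $N$, together with the boundary pieces needed to complete a partition of $N\setminus\{c\}$ into clopen intervals, and define $h_{c}$ piecewise as an order-preserving homeomorphism $J_{n}\to J_{n}^{*}$ and analogously on the boundary pieces, extended by the identity outside $N$. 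Because $J_{n}$ approaches $c$ from below while $J_{n}^{*}$ approaches $\langle b,1\rangle$ from above, continuity forces $h_{c}(c)=\langle b,1\rangle\in\arrow_{1}$; and because each piece is order-preserving, $\arrow_{0}\cap J_{n}$ is sent onto $\arrow_{0}\cap J_{n}^{*}$, so that $h_{c}[\arrow_{0}\setminus\{c\}]=\arrow_{0}$.

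To build $h_{1}$ from these local shifts I choose pairwise disjoint clopen neighborhoods $N_{c}$ (one for each $c\in\mathrm{iso}(C)$), shrinking them so that the diameters of the $N_{c}$'s with $c$ approaching any point $c^{*}\in C'$ tend to zero; then setting $h_{1}=h_{c}$ on each $N_{c}$ and the identity elsewhere gives a well-defined homeomorphism, continuous at each $c^{*}\in C'$. The main obstacle is precisely the local construction: at first sight it looks impossible, since an order-preserving autohomeomorphism of $\arrow$ must fix $\arrow_{0}$ setwise. The way out is that although $h_{c}$ is order-preserving on each clopen piece of its partition, the pieces themselves are permuted (those just below $c$ are sent to those just above $\langle b,1\rangle$), so $h_{c}$ is not globally order-preserving, and this is what allows $c$ to escape into $\arrow_{1}$ in the limit. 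Verifying bijectivity and continuity at the boundaries of the pieces, and especially at $c$ and $\langle b,1\rangle$, is the central technical check.
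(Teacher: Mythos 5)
Your local ``shift'' is exactly the right mechanism, and it is the same one the paper uses: a point $\pair{a,0}$ is pushed into $\arrow_1$ by mapping a sequence of clopen intervals increasing to $\pair{a,0}$ onto a sequence of clopen intervals decreasing to $\pair{b,1}$, each piece order-preservingly; since order isomorphisms of clopen intervals of $\arrow$ respect $\arrow_0$ and $\arrow_1$, and the limit point changes sides, this does what you want. The gluing over pairwise disjoint shrinking neighborhoods $N_c$ is also fine.

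The genuine gap is in the inductive scheme. You induct on the Cantor--Bendixson rank and claim that $C'=C^{(1)}$ has strictly smaller rank than $C$. That is true only when the rank is finite. If $C$ is homeomorphic to $\omega^\omega+1$ (which does embed as a compact subset of $\arrow_0$: take a topologically embedded increasing copy of this ordinal in $(0,1]\times\{0\}$, closed under suprema), then $C^{(1)}$ is again homeomorphic to $\omega^\omega+1$ and has the same rank $\omega+1$; peeling isolated points never terminates in finitely many steps, and an infinite composition of the resulting homeomorphisms need not converge to a homeomorphism. So your induction only covers compact scattered sets of finite rank. The standard repair --- and the route the paper takes --- is to peel from the \emph{top} rather than the bottom: write the rank as $\tau+1$, note that the last nonempty derivative $C^{(\tau)}$ is finite, isolate each of its points in a clopen interval, reduce to the case where that point is the right endpoint $\pair{1,0}$ of the interval, and decompose the rest of the interval into countably many clopen intervals $I_n$ increasing to $\pair{1,0}$; each $C\cap I_n$ then has rank at most $\tau$, so the inductive hypothesis genuinely applies, and the same reversal trick (sending the $I_n$ onto intervals decreasing to $\pair{0,1}$) disposes of the top-rank point. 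With that reorganization your argument goes through.
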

\begin{proof}
We will prove this result by induction on the Cantor-Bendixson rank of $C$. By the compactness of $C$, such rank must be equal to $\tau+1$ for some ordinal $\tau$. The last non-empty Cantor-Bendixson derivative of $C$ is the derivative of order $\tau$, we will call this set $C\sp{(\tau)}$. 

There are pairwise disjoint clopen intervals $J_0,\ldots, J_m$ of $\arrow$ such that $|C\sp{(\tau)}\cap J_i|\leq 1$ for each $i\leq m$ and $\arrow=J_0\cup\ldots\cup J_m$. Every clopen interval of $\arrow$ is of the form $[\pair{p,0},\pair{q,1}]$ where $0\leq p<q\leq 1$. So $J_i$ is order-isomorphic (in particular, homeomorphic) to $\arrow$ for $i\leq m$. Thus, it is enough to define the homeomorphism in each of the intervals $J_i$ for $i\leq m$. For the rest of the proof let us assume without loss of generality that $C\sp{(\tau)}=\{\pair{p,0}\}$ for some $p\in(0,1]$.

First assume that $p\neq 1$, let $A_0=[\pair{0,1},\pair{p,0}]$ and $A_1=[\pair{p,1},\pair{1,0}]$. Let $f_0:[0,p]\to[1/2,1]$ and $f_1:[p,1]\to[0,1/2]$ be order isomorphisms and define $f:\arrow\to\arrow$ by 
$$
f(\pair{q,t})=\left\{
\begin{array}{cc}
\pair{f_0(q),t} & \textrm{ if }\pair{q,t}\in A_0,\\
\pair{f_1(q),t} & \textrm{ if }\pair{q,t}\in A_1.
\end{array}
\right.
$$
Then $f$ is a homeomorphism, $f[C]\subset\arrow_0$ and $f(\pair{p,0})=\pair{1,0}$. This shows that we may assume that $p=1$ for the rest of the proof.

Let $\{x_n:n<\omega\}\subset(0,1)$ be increasing such that $\sup\{x_n:n<\omega\}=1$. Let $I_0=[\pair{0,1},\pair{x_0,0}]$ and $I_{n+1}=[\pair{x_n,1},\pair{x_{n+1},0}]$ for $n<\omega$. Notice that 
$$
\arrow=\big(\bigcup\{I_n:n<\omega\}\big)\cup\{\pair{1,0}\}
$$
and $I_n$ is a clopen subset order-isomorphic to $\arrow$ for each $n<\omega$. Since $C\sp{(\tau)}=\{\pair{1,0}\}$, the Cantor-Bendixson rank of $C\cap I_n$ is at most $\tau$ for each $n<\omega$. Thus, by the inductive hypothesis, there exists a homeomorphism $h_n:I_n\to[\pair{1/(n+2),1},\pair{1/(n+1),0}]$ such that $h_n[(\arrow_0\cap I_n)\setminus (C\cap I_n)]=(1/(n+2),1/(n+1)]\times\{0\}$ for each $n<\omega$. We define $h:\arrow_0\to\arrow_0$ by
$$
h=\{\pair{\pair{1,0},\pair{0,1}}\}\cup\big(\bigcup\{h_n:n<\omega\}\big).
$$
It is not hard to see that $h$ is a homeomorphism and $h[\arrow_0\setminus C]=\arrow_0$. This completes the proof.
\end{proof}

\begin{propo}\label{arrowtimessequence}
$\arrow\times(\omega+1)$ is not homeomorphic to $\arrow$.
\end{propo}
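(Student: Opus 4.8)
The plan is to argue by contradiction, extracting a contradiction from the way the limit copy $\arrow\times\{\omega\}$ sits inside the product. Suppose $h\colon\arrow\to\arrow\times(\omega+1)$ is a homeomorphism. First I would pull the canonical fibre decomposition back to $\arrow$: set $X=h^{-1}[\arrow\times\{\omega\}]$ and, for each $n<\omega$, $H_n=h^{-1}[\arrow\times\{n\}]$. Since $\arrow\times\{\omega\}$ is closed, crowded and nowhere dense, $X$ is a closed crowded nowhere dense subset of $\arrow$, so $X\approx\arrow$ by Proposition \ref{closedinarrow}; since $\arrow\times\{n\}$ is clopen, each $H_n$ is clopen in the compact space $\arrow$, hence a finite union of clopen intervals and thus $H_n\approx\arrow$. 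Moreover $\arrow=X\sqcup\bigsqcup_{n<\omega}H_n$ with the $H_n$ pairwise disjoint, and a short compactness argument turns the convergence $\arrow\times\{n\}\to\arrow\times\{\omega\}$ into two forced conditions: every open set containing $X$ contains all but finitely many $H_n$, and every point of $X$ lies in $\mathrm{cl}\big(\bigcup_{n\geq N}H_n\big)$ for every $N$.

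The first thing the plan must take into account is that these conditions on the sets $H_n$ alone are \emph{not} contradictory: the Cantor-type decomposition of $\arrow$ used in the proof of Theorem \ref{arrowtypes}, writing $\arrow$ as a nowhere dense crowded copy of $\arrow$ together with its countably many clopen gap-intervals, realizes exactly such a family. Hence the contradiction cannot be read off from the sets $H_n$ and $X$ themselves; it must come from the coherence that $h$ imposes between the fibres. Concretely, after fixing homeomorphisms identifying each $\arrow\times\{n\}$ and $\arrow\times\{\omega\}$ with $\arrow$, I would consider the homeomorphisms $g_n=h^{-1}\!\restriction_{\arrow\times\{n\}}\colon\arrow\to H_n$ and $g_\omega=h^{-1}\!\restriction_{\arrow\times\{\omega\}}\colon\arrow\to X$. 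Because the $n$-th fibre approaches the limit fibre through the identity on the $\arrow$-coordinate, continuity of $h^{-1}$ forces $g_n\to g_\omega$ pointwise: for every $w\in\arrow$ one has $g_n(w)\to g_\omega(w)$, with $g_n(w)\in H_n$ and $g_\omega(w)\in X$.

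The main obstacle, and the heart of the argument, is to show that no such convergent system of homeomorphisms can exist. Here I would invoke Proposition \ref{homeomorphisms}: read as a map between copies of $\arrow$, each $g_n$ is monotone on each member of a dense family of clopen intervals, hence is assembled from finitely many monotone pieces on any fixed clopen interval. The tension to exploit is that $X\approx\arrow$ is crowded and nowhere dense, so near each of its points the gaps $H_n$ accumulate densely, while at the same time each individual $H_n$ is only a finite union of intervals; demanding that $g_n(w)$ converge to $g_\omega(w)$ simultaneously for all $w$ forces the number of monotone pieces of $g_n$ to grow without bound along a fixed interval, yet the pointwise limit $g_\omega$ must remain a single homeomorphism onto $X$. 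I expect the contradiction to come from this incompatibility, quantified for instance by tracking the jump structure of $\arrow$ through the maps $g_n$ and showing that a jump of the limit copy $X$ would have to arise as a limit of jumps drawn from infinitely many distinct $H_n$, which the pairwise disjointness and the finite interval structure of the $H_n$ forbid. Converting this last step into a clean counting argument is the part I expect to require the most care.
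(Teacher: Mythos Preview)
Your setup is fine, and you are right that the bare decomposition $\arrow=X\sqcup\bigsqcup_n H_n$ together with the convergence conditions is not by itself contradictory. But the place where you try to locate the contradiction is not convincing. Nothing prevents the number of clopen intervals making up $H_n$, or the number of monotone pieces of $g_n$, from growing without bound as $n\to\infty$; a pointwise limit of increasingly ``complicated'' homeomorphisms can perfectly well be a homeomorphism onto $X$. Likewise, the sentence about a jump of $X$ having to arise as a limit of jumps from infinitely many $H_n$, and this being forbidden by disjointness and finite interval structure, does not carry an argument: a point of $X$ \emph{will} be a limit of points from infinitely many $H_n$, and there is no obvious reason these approximating points cannot be endpoints of jumps. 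So as it stands the plan names a tension but not a contradiction, and I do not see how to finish along the lines you sketch.

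The paper's proof uses a different, and rather clean, idea that you are missing: a weight argument. Each $H_n=h^{-1}[\arrow\times\{n\}]$ is a finite union of clopen intervals, and the corresponding finite partition of $\arrow\times\{n\}\cong\arrow$ into their $h$-preimages gives a countable family $\U$ of clopen subsets of $\arrow$. Since $\arrow$ has weight $\mathfrak c$, this countable family cannot be a base, and by compactness one finds two distinct points $x,y\in\arrow$ that are not separated by any member of $\U$. Now the clopen interval of $H_n$ containing $g_n(x)$ is exactly the same one containing $g_n(y)$; using that $g_n(x)\to g_\omega(x)\in X$ and that $X$ is crowded, one shows these intervals shrink around $g_\omega(x)$, so $g_n(y)\to g_\omega(x)$ as well, contradicting $g_n(y)\to g_\omega(y)\neq g_\omega(x)$. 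The cardinality mismatch between the countable family $\U$ and the weight of $\arrow$ is the missing key.
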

\begin{proof}
Assume that there is a homeomorphism $h:\arrow\times(\omega+1)\to\arrow$, we will reach a contradiction. It is not hard to prove that every clopen subset of $\arrow$ is the finite union of clopen intervals. Then for all $n<\omega$ there is $m_n<\omega$ and a partition $\{J(n,i): i\leq m_n\}$ of $\arrow$ into clopen subsets such that $h[J(n,i)\times\{n\}]$ is a clopen interval for all $i\leq m_n$. The family
$$
\U=\{J(n,i):n<\omega,i\leq m_n\}
$$
is a countable collection of clopen subsets of $\arrow$. Since $\arrow$ has weight $\C$, $\U$ is not a base of $\arrow$. From the compactness of $\arrow$ it is possible to find $x,y\in\arrow$ with $x\neq y$ and such that $x\in W$ if and only if $y\in W$ for all $W\in\U$.

By the definition of the product topology, $\{\pair{x,n}:n<\omega\}$ converges to $\pair{x,\omega}$ and $\{\pair{y,n}:n<\omega\}$ converges to $\pair{y,\omega}$. Let $p=h(\pair{x,\omega})$, we will show that $\{h(\pair{y,n}):n<\omega\}$ converges to $p$, this contradicts the fact that $h$ is injective and we will have finished.

Assume that $p\in\arrow_0$, the other case is entirely analogous. It is enough to prove that for every $q\in\arrow$ with $q<p$ there is $k<\omega$ with $h(\pair{y,k})\in(q,p]$. Since $h[\arrow\times\{\omega\}]$ is crowded, there is $r\in h[\arrow\times\{\omega\}]\cap(q,p)$. By the continuity of $h$, there exists $k<\omega$ such that $h(\pair{x,k})\in(r,p)$. Let $j\leq m_k$ be such that $x\in J(k,j)$. Since $h[J(k,j)\times\{k\}]$ is an interval that intersects $(r,p)$ and does not contain its endpoints, $h[J(k,j)\times\{k\}]\subset(r,p)$. Then $h(\pair{y,k})\in (r,p)\subset(q,p]$. This proves that $\{h(\pair{y,n}):n<\omega\}$ converges to $p$ and as discussed above, finishes the proof. 
\end{proof}

\begin{coro}\label{cororeferee}
$\arrow$ does not contain any subspace homeomorphic to $\arrow\times(\omega+1)$.
\end{coro}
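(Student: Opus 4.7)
The plan is to combine Proposition \ref{closedinarrow} with Proposition \ref{arrowtimessequence} to get an immediate contradiction. Suppose for contradiction that $S \subset \arrow$ is a subspace homeomorphic to $\arrow \times (\omega+1)$. I want to argue that $S$ must then be homeomorphic to $\arrow$ itself, which would contradict Proposition \ref{arrowtimessequence}.

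First I would verify that $\arrow \times (\omega+1)$ is compact and crowded. Compactness is clear since both factors are compact. For crowdedness, note that $\arrow$ is crowded and hence so is any product with a non-empty space: given $\pair{x,n} \in \arrow \times (\omega+1)$ and a basic open neighborhood $U \times V$, the set $U$ contains a point $x' \neq x$ because $\arrow$ has no isolated points, so $\pair{x',n} \in U \times V$ is a point of the neighborhood distinct from $\pair{x,n}$.

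Consequently $S$ is compact, and therefore closed in the Hausdorff space $\arrow$, and also crowded. By Proposition \ref{closedinarrow}, $S$ is homeomorphic to $\arrow$. But then $\arrow \times (\omega+1)$ is homeomorphic to $\arrow$, contradicting Proposition \ref{arrowtimessequence}. There is no real obstacle here: the entire argument is a one-line deduction from the two propositions, and the only thing to check carefully is that $\arrow \times (\omega+1)$ meets the hypotheses of Proposition \ref{closedinarrow} when transported via the assumed embedding, which it does because compactness and crowdedness are preserved by homeomorphisms.
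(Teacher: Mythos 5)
Your argument is correct and is exactly the paper's proof, which simply says to apply Propositions \ref{closedinarrow} and \ref{arrowtimessequence}; you have merely spelled out the (true and easily verified) facts that $\arrow\times(\omega+1)$ is compact and crowded, hence any copy of it inside $\arrow$ is closed and crowded and so homeomorphic to $\arrow$, contradicting Proposition \ref{arrowtimessequence}. Nothing further is needed.
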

\begin{proof}
Apply Propositions \ref{closedinarrow} and \ref{arrowtimessequence}.
\end{proof}

So we can in fact prove the following result.

\begin{coro}
If $C$ is compact and countably infinite, then $C\times\arrow$ is neither homogeneous nor CDH.
\end{coro}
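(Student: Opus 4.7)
The plan is to isolate a topological invariant that separates points over isolated versus non-isolated coordinates of $C$, and to derive both conclusions from it. Write $I$ for the set of isolated points of $C$. Since $C$ is compact, Hausdorff, and countable, it is metrizable and scattered, so $I$ is open in $C$ and, as the isolated-point set of a scattered space, dense in $C$. Define
\[
H = \{ x \in C \times \arrow : x \text{ has a clopen neighborhood homeomorphic to } \arrow \}.
\]

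The central step is to verify $H = I \times \arrow$. The inclusion $\supseteq$ is immediate: for $c \in I$ the clopen set $\{c\} \times \arrow$ is itself a copy of $\arrow$. For $\subseteq$, suppose $c \in C \setminus I$ and $p \in \arrow$; any clopen neighborhood $U$ of $\pair{c,p}$ contains some basic open $V \times W$ with $V$ an open neighborhood of $c$ in $C$ and $W$ an open neighborhood of $p$ in $\arrow$. Since $c$ is non-isolated and $C$ is first-countable, $V$ contains a convergent sequence together with its limit, i.e.\ a closed copy of $\omega+1$; and $W$ contains a clopen interval, which is homeomorphic to $\arrow$. Hence $\arrow\times(\omega+1)$ embeds into $V \times W \subseteq U$, so if $U$ were homeomorphic to $\arrow$ this would contradict Corollary \ref{cororeferee}.

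Because $H$ is defined in purely topological terms, every autohomeomorphism of $C \times \arrow$ preserves $H$ and its complement $(C \setminus I) \times \arrow$. For the failure of homogeneity, fix $c \in I$, $c' \in C \setminus I$ (which exists because any infinite compact Hausdorff space has a non-isolated point), and any $p \in \arrow$; then $\pair{c,p} \in H$ while $\pair{c',p} \notin H$, so no autohomeomorphism can carry one to the other. For the failure of CDH, note that $H$ is open and dense, so I choose any countable dense $D \subseteq H$: it is automatically dense in $C \times \arrow$ and satisfies $D \cap (C \setminus H) = \emptyset$. Then take any countable dense $E \subseteq C \times \arrow$ containing at least one point of $(C \setminus I) \times \arrow$, for instance by adjoining such a point to some countable dense subset. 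If a homeomorphism $h$ of $C \times \arrow$ satisfied $h[D] = E$, invariance of $C \setminus H$ would force $\emptyset = h[D \cap (C \setminus H)] = E \cap (C \setminus H) \neq \emptyset$, a contradiction.

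The main obstacle is the characterization $H = I \times \arrow$, specifically the $\subseteq$ inclusion, which is precisely where Corollary \ref{cororeferee} bites: that corollary rules out any clopen ``$\arrow$-shaped'' neighborhood around a point with non-isolated first coordinate. Once this invariant is in hand, both assertions reduce to routine bookkeeping.
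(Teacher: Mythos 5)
Your proof is correct and rests on exactly the same invariant as the paper's: the set $W=H$ of points admitting a clopen neighborhood homeomorphic to $\arrow$, identified with (isolated points of $C$)$\times\arrow$ via Corollary \ref{cororeferee}, and the observation that this set is preserved by every autohomeomorphism. The homogeneity half is identical to the paper's. For the CDH half you actually do better: the paper only gestures at ``modify the proof of \cite[Theorem 3.2]{arh-vm-cdh-cardinality} (or Theorem \ref{arrowtypes})'', whereas you give a short self-contained argument --- take a countable dense $D$ inside the open dense invariant set $H$ and a countable dense $E$ meeting the closed invariant complement; invariance of $H$ then rules out any homeomorphism carrying $D$ onto $E$. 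This is cleaner and complete, and all the supporting facts you use (that $C$ is metrizable and scattered, that its isolated points form a dense open set, that a non-isolated point of $C$ together with a clopen interval of $\arrow$ yields a copy of $\arrow\times(\omega+1)$ inside any neighborhood) are correctly justified.
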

\begin{proof}
Since $C$ has a dense set of isolated points $D$, it is not hard to see that the set 
$$
W=\{x\in C\times\arrow:x\textrm{ has a clopen neighborhood homeomorphic to }\arrow\}
$$
is equal to the open dense subset $D\times\arrow$ of $C\times\arrow$. However, if $x\in C$ is a limit of a non-trivial sequence and $t\in\arrow$, every neighborhood of $\pair{x,t}$ contains a set homeomorphic to $\arrow\times(\omega+1)$. So there is no homeomorphism that takes $\pair{x,t}$ to $W$ by Corollary \ref{cororeferee}.

To show that $C\times\arrow$ is not CDH, modify the proof of \cite[Theorem 3.2]{arh-vm-cdh-cardinality} (or perhaps, the proof of our Theorem \ref{arrowtypes}) using that $W$ is invariant under homeomorphisms.
\end{proof}

The following result can be shown by modifying the proof of Proposition \ref{homeomorphisms} given in \cite{arh-vm-cdh-cardinality}. We will not give the details.

\begin{propo}\label{generalization}
Let $f:\arrow\to\arrow$ be a continuous function. Then there exists a collection $\U$ of pairwise disjoint clopen intervals of $\arrow$ such that $\bigcup\U$ is dense in $\arrow$ and for every $J\in\U$, $f\restriction_{J}:J\to\arrow$ is either non-increasing or non-decreasing.
\end{propo}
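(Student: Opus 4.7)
The plan is to adapt the proof of Proposition~\ref{homeomorphisms} from \cite{arh-vm-cdh-cardinality}, relaxing strictness wherever bijectivity was used. First I would show that the family of clopen intervals $J$ of $\arrow$ on which $f\restriction_J$ is non-increasing or non-decreasing has dense union; a routine maximality argument then extracts the pairwise disjoint subfamily $\U$. Since every clopen interval of $\arrow$ is order-isomorphic to $\arrow$, it suffices to find, inside each clopen interval, some clopen subinterval on which $f$ is monotone; after passing to such a subinterval we may as well assume that $f$ is defined on all of $\arrow$.

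The image $K=f[\arrow]$ is a compact subset of $\arrow$ and, by Proposition~\ref{closedinarrow}, is either scattered or homeomorphic to $\arrow$. If $K$ has an isolated point $y$, then $f\sp{\leftarrow}(\{y\})$ is clopen in $\arrow$, therefore a finite union of clopen intervals, and on each of these $f$ is constant---both non-increasing and non-decreasing---so we are done immediately. The substantive case is when $K$ is crowded; after identifying $K$ with $\arrow$ via Proposition~\ref{closedinarrow}, $f$ becomes a continuous surjection $\arrow\to\arrow$.

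At this point the local analysis underlying the proof of Proposition~\ref{homeomorphisms} in \cite{arh-vm-cdh-cardinality} applies almost verbatim. Because every point of $\arrow$ has only one-sided neighborhoods, continuity of $f$ at $p$ forces one of four local bounds: for $p=\pair{a,0}$ one has $f(q)\leq f(p)$ on some left-neighborhood of $p$ when $f(p)\in\arrow_0$, and $f(q)\geq f(p)$ when $f(p)\in\arrow_1$, with symmetric statements for $p\in\arrow_1$ on the right. In the bijective setting, these one-sided bounds are combined across the two endpoints of a small enough clopen interval---using that a homeomorphism permutes jumps of $\arrow$---to yield strict monotonicity on clopen neighborhoods of almost every point; the same combinatorial skeleton carries through for continuous $f$ after replacing ``strict'' by ``non-strict'', since the loss of injectivity merely permits $f$ to be constant on subintervals or to collapse jumps to points, both of which are compatible with (indeed, witness) non-strict monotonicity. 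The main obstacle will be the propagation step: without injectivity one must verify that any interior witness to non-monotonicity inside a candidate clopen interval would still contradict the one-sided bound at one of its endpoints, thereby violating continuity. Once density of monotone clopen intervals is established, $\U$ is produced by a standard Zorn-type maximality argument.
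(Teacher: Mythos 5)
The approach you outline is the one the paper itself intends: the text gives no proof of this proposition, only the instruction to modify the argument for Proposition~\ref{homeomorphisms} from \cite{arh-vm-cdh-cardinality}, and your local analysis is the correct starting point. Indeed, continuity at $p=\pair{a,0}$ (resp.\ $p=\pair{a,1}$) forces $f(q)\leq f(p)$ or $f(q)\geq f(p)$ for all $q$ in some left (resp.\ right) neighbourhood of $p$, according to whether $f(p)\in\arrow_0$ or $f(p)\in\arrow_1$. Your reduction to finding a single monotone clopen subinterval inside each clopen interval, followed by a maximal disjoint subfamily, is fine, as is the disposal of the case where the image has an isolated point (the preimage of such a point is a nonempty clopen set, hence contains a clopen interval on which $f$ is constant). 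Two small corrections: a compact subset of $\arrow$ need not be ``scattered or homeomorphic to $\arrow$'' (it can mix a crowded kernel with a scattered part) --- the dichotomy you actually use, ``has an isolated point or is crowded,'' is the right one; and the identification of a crowded image with $\arrow$ via Proposition~\ref{closedinarrow}, while harmless, buys you nothing.

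The genuine gap is the one you flag yourself and then do not close: the passage from these pointwise one-sided bounds to monotonicity of $f$ on an entire clopen interval. That step is the whole content of the proposition, and it is not obtained by mechanically replacing ``strict'' with ``non-strict'' in the homeomorphism argument, because that argument can also invoke the continuity of $h^{-1}$ (for instance to control images of jumps), a tool unavailable for a merely continuous $f$. A concrete way to finish: for $n\geq 1$ and each of the four sign patterns, consider the set of $x\in(0,1)$ such that $f(q)\leq f(\pair{x,0})$ (or $\geq$) for all $q<\pair{x,0}$ with $\pi(q)>x-1/n$, and $f(q)\geq f(\pair{x,1})$ (or $\leq$) for all $q>\pair{x,1}$ with $\pi(q)<x+1/n$. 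The one-sided bounds show these countably many sets cover $(0,1)$; each is closed; so by Baire category every interval contains a subinterval of length less than $1/n$ lying in one of them. On the corresponding clopen interval of $\arrow$ the ``non-decreasing'' and ``non-increasing'' patterns give monotonicity directly, while the two ``local extremum'' patterns force $f$ to be constant there (compare $f(\pair{y,1})$ with $f(\pair{x,0})$ for $y<x$ in the interval to get inequalities in both directions). Until an argument of this kind is actually written out, what you have is an outline with the decisive step still open.
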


\begin{propo}
If $1\leq n<\omega$, then ${}\sp{n}\arrow$ is not CDH.
\end{propo}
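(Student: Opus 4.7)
The plan is to prove the stronger statement that ${}\sp{n}\arrow$ has $\C$ types of countable dense subsets, which in particular rules out CDH. For $n=1$ this is Theorem \ref{arrowtypes}, so assume $n\geq 2$. My strategy is to lift the construction of Theorem \ref{arrowtypes} into the higher-dimensional setting.

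Fix a point $\mathbf{p}\in{}\sp{n-1}\arrow$ and take $Y=X\times\{\mathbf{p}\}\subseteq{}\sp{n}\arrow$, where $X\subseteq\arrow$ is the closed crowded nowhere dense set from the proof of Theorem \ref{arrowtypes} (namely, $\arrow$ minus a dense union of pairwise disjoint clopen intervals $\{J_k\}$). Then $Y\approx\arrow$ is closed, crowded, and nowhere dense in ${}\sp{n}\arrow$, and because $Y$ is first-countable, regular, and crowded, every countable dense subset of $Y$ is homeomorphic to $\Q$. By Lemma \ref{countablesubsets} pick pairwise non-homeomorphic countable subspaces $\{C_\alpha:\alpha<\C\}$ of $Y$. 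Imitating the construction of $D$ in Theorem \ref{arrowtypes}, I would produce a countable dense subset $D'$ of ${}\sp{n}\arrow\setminus Y$ that carries the same controlled ``jump'' pattern, now distributed over gap clopen rectangles of the form $J_k\sp{i}\times V$ with $V$ ranging over a countable $\pi$-base of ${}\sp{n-1}\arrow$. Set $D_\alpha=D'\cup C_\alpha$, which is a countable dense subset of ${}\sp{n}\arrow$ since $D'$ is dense and $Y$ is nowhere dense.

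Assume for contradiction that $\beta<\gamma<\C$ and $h:{}\sp{n}\arrow\to{}\sp{n}\arrow$ is a homeomorphism with $h[D_\beta]=D_\gamma$. It suffices to prove $h[Y]=Y$, since this forces $h[C_\beta]=C_\gamma$ and contradicts the choice of the $C_\alpha$. The main obstacle is that Proposition \ref{homeomorphisms} describes only autohomeomorphisms of $\arrow$, not of ${}\sp{n}\arrow$. To circumvent this, I would apply Proposition \ref{generalization} coordinate-by-coordinate: for each index $i$ and each axis-parallel fiber $F\approx\arrow$ inside ${}\sp{n}\arrow$, the map $\pi_i\circ h\restriction_F:\arrow\to\arrow$ is continuous, hence piecewise monotone on a dense collection of clopen intervals of $F$. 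Since a one-dimensional ``jump'' $\{\pair{x,0},\pair{x,1}\}$ is preserved by any such piecewise-monotone restriction, the jump argument of Theorem \ref{arrowtypes} can then be run coordinate-by-coordinate to rule out the possibility that $h$ moves any point of ${}\sp{n}\arrow\setminus Y$ into $Y$; symmetry then gives $h[Y]=Y$. The chief technical difficulty — and presumably the reason the paper leaves this result as an exercise — is bookkeeping the jump pattern simultaneously across all $n$ coordinates.
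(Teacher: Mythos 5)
There is a genuine gap at the central step of your argument: the claim that $h[Y]=Y$, where $Y=X\times\{\mathbf{p}\}$ is a single axis-parallel fiber of ${}\sp{n}\arrow$. In Theorem \ref{arrowtypes} the analogous claim $h[X]=X$ works because $X$ is canonically determined by the jump pattern of $D$: every point of $\arrow\setminus X$ lies in some $J_k\sp i$, where the presence or absence of jumps inside $D$ is prescribed by $(\ast)$, and Proposition \ref{homeomorphisms} gives an interval on which $h$ itself (not just a coordinate of it) is monotone. In your lifting, nothing in the construction of $D'$ pins down the particular fiber over $\mathbf{p}$: the set $X\times{}\sp{n-1}\arrow$ might conceivably be recognizable from the jump pattern, but $Y$ is a nowhere dense slice of it, and a homeomorphism respecting your dense sets has no visible obstruction to sliding $Y$ to $X\times\{\mathbf{q}\}$ for some other $\mathbf{q}$, destroying the conclusion $h[C_\beta]=C_\gamma$. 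Moreover, your proposed substitute for Proposition \ref{homeomorphisms} --- applying Proposition \ref{generalization} to $\pi_i\circ h\restriction_F$ for every fiber $F$ --- produces, for each of the uncountably many fibers, a dense family of clopen intervals with no uniformity across fibers, and ``non-increasing or non-decreasing'' maps may collapse jumps, so the assertion that jumps are ``preserved by any such piecewise-monotone restriction'' is not correct as stated. The bookkeeping you defer is not a technicality; it is the whole proof, and it is not clear it can be carried out for a single fiber $Y$.

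For contrast, the paper's proof is far more economical and does not attempt to count types of dense sets. It takes $D=Q\times\{0\}$ and $E=Q\times\{0,1\}$ with $Q=\Q\cap(0,1)$, supposes $h[{}\sp{n}E]={}\sp{n}D$, and restricts $h$ to one fiber $X$ through a point of ${}\sp{n}D$. Applying Proposition \ref{generalization} to the $n$ coordinate functions $f_i=\pi_i\circ h\restriction_X$ yields a single clopen interval $J$ on which each $f_i$ is monotone; injectivity of $h$ forces some $f_j$ to be strictly monotone there, and then a jump $[p,q]$ in $J$ with $\pi(p)=\pi(q)\in Q$ (so $p,q\in{}\sp{n}E$) must have $f_j(q)\in\arrow_1$, contradicting $\pi_j[{}\sp{n}D]\subset Q\times\{0\}$. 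If you want to pursue the stronger statement that ${}\sp{n}\arrow$ has $\C$ types of countable dense subsets, you would first need a mechanism that makes a specific fiber (or some other copy of $\arrow$ of full ``ambient codimension'') invariant under all homeomorphisms respecting your dense sets; the present sketch does not supply one.
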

\begin{proof}
Assume that $n\geq 2$, since the case $n=1$ was treated in \cite{arh-vm-cdh-cardinality}. Recall that $\Q$ denotes the set of rational numbers. Let $Q=\Q\cap(0,1)$, $D=Q\times\{0\}$ and $E=Q\times\{0,1\}$. Then $D$ and $E$ are countable dense subsets of $\arrow$ so ${}\sp{n}D$ and ${}\sp{n}E$ are countable dense subsets of ${}\sp{n}\arrow$. Assume that there is a homeomorphism $h:{}\sp{n}\arrow\to{}\sp{n}\arrow$ such that $h[{}\sp{n}E]={}\sp{n}D$, we will reach a contradiction.

Let $d\in D$ and define $X=\{x\in{}\sp{n}\arrow:x(i)=d\textrm{ for all }1\leq i\leq n-1\}$. We will identify $X$ with $\arrow$ in the obvious way. Also notice that ${}\sp{n}E\cap X$ is dense in $X$. For $i\leq n-1$, let $\pi_i:{}\sp{n}{\arrow}\to\arrow$ be the projection to the $i$-th coordinate and let $f_i=\pi_i\circ h\!\!\restriction_X:X\to\arrow$, this is a continuous function.

By Proposition \ref{generalization} there is some clopen interval $J\subset X$ such that $f_i\!\!\restriction_J:J\to\arrow$ is either non-decreasing or non-increasing for each $i\leq n-1$. Notice that it is impossible that $f_i$ is constant on an interval for all $i\leq n-1$ because this would contradict the injectivity of $h$. Furthermore, it is not hard to see that given a clopen interval $I\subset\arrow$ and a non-decreasing (or non-increasing) continuous function $f:I\to\arrow$, if $f$ is not constant on any clopen interval then $f$ is injective. Thus, we may assume that $f_j\!\!\restriction_J:J\to\arrow$ is one-to-one for some $j\leq n$. Also assume that $f_j\!\!\restriction_J$ is strictly increasing, the other case is similar.

Let $p,q\in J$ be such that $\pi(p)=\pi(q)\in Q$ and $q$ is the immediate successor of $p$ in the order of $X$, notice that $p,q\in{}\sp{n}E$ . By the fact that $f_j$ is strictly increasing, it is not hard to prove that $f_j(p)\in\arrow_0$ and $f_j(q)\in\arrow_1$. But $h(q)\in{}\sp{n}D$ by the choice of $h$ and $\pi_j[{}\sp{n}D]\subset D$ is disjoint from $\arrow_1$. This is a contradiction which shows that such a homeomorphism $h$ cannot exist. 
\end{proof}

\section*{acknowledgements}

The author would like to thank Professor Michael Hru\v s\'ak for suggesting the problems and for the fruitful discussions that took place. The referee's suggestions on the organization of the paper are also greatly appreciated.


\begin{thebibliography}{99}

\bibitem{alex-uryh-compact} Alexandroff, P. and Urysohn, P.; ``M\'emoire sur les espaces topologiques compacts.'' Verh. Akad. Wetensch. Amsterdam 14 (1929).

\bibitem{arh-vm-cdh-cardinality} Arhangel'ski\u \i, A. V. and van Mill, Jan; ``On the cardinality of countable dense homogeneous spaces.'', to appear in Proc. Amer. Math. Soc.

\bibitem{arh-vm-homogeneity} Arhangel'ski\u \i, A. V. and van Mill, Jan; ``Topological Homogeneity.'', to appear in Recent Progress on General Topology III

\bibitem{CDH_MA} Baldwin, S. and Beaudoin, R. E.; ``Countable dense homogeneous spaces under Martin's axiom.'' Israel J. Math. 65 (1989), no. 2, 153--164.

\bibitem{brian-vanmill-suabedissen} Brian, W., van Mill, J. and Suabedissen R.; ``Homogeneity and generalizations of 2-point sets.'', to appear in Houston J. Math.
\bibitem{justin-sorgenfrey} Burke, D. K. and Moore, J. T.; ``Subspaces of the Sorgenfrey line.'' Topology Appl. 90 (1998), no. 1-3, 57--68.

\bibitem{vd62} van Douwen, Eric K.; ``The Integers and Topology.'' Handbook of Set-Theoretic Topology (K. Kunen and Jerry E. Vaughan, eds.), North-Holland, Amsterdam, 1984, 111--167.

\bibitem{dow-pearl} Dow, A. and  Pearl, E.; ``Homogeneity in powers of zero-dimensional first-countable spaces.'' Proc. Amer. Math. Soc. 125 (1997), no. 8, 2503--2510.

\bibitem{eng} Engelking, R.; ``General topology.'' Translated from the Polish by the author. Second edition. Sigma Series in Pure Mathematics, 6. Heldermann Verlag, Berlin, 1989. viii+529 pp. ISBN: 3-88538-006-4

\bibitem{lambda} Farah, I., Hru\v s\'ak, M. and Mart\'inez Ranero, C.;  ``A countable dense homogeneous set of reals of size $\aleph_1$.'' Fund. Math. 186 (2005), no. 1, 71--77.

\bibitem{fitz-zhou-cdhbaire} Fitzpatrick, B., Jr. and Zhou, H. X.; ``Countable dense homogeneity and the Baire property.'' Topology Appl. 43 (1992), no. 1, 1--14.

\bibitem{cdhnonmeagerPfilter} Hern\'andez-Guti\'errez, R. and Hru\v s\'ak, M.; ``Non-meager $P$-filters are countable dense homogeneous.'' Colloq. Math. 130 (2013), 281--289.

\bibitem{cdhdefinable} Hru\v s\'ak, M. and Zamora Avil\'es, B.; ``Countable dense homogeneity of definable spaces.'' Proc. Amer. Math. Soc. 133 (2005), no. 11, 3429--3435.

\bibitem{hrusak-vm-cdh} Hru\v s\'ak, M. and van Mill, J.; ``Nearly countable dense homogeneous spaces.'' preprint

\bibitem{mazurkiewicz-sierpinski} Mazurkiewicz, S. and Sierpi\'nski, W.; ``Contribution \`a la topologie des ensembles d\'enombrables.'' Fund. Math. 1 (1920), no. 1, 17--27.

\bibitem{medinimilovich} Medini, A. and Milovich, D.; ``The topology of ultrafilters as subspaces of $2\sp\omega$.'' Topology Appl. 159, (2012), no. 5, 1318--1333.

\bibitem{steprans-zhou} Stepr\= ans, J. and Zhou, H. X.; ``Some results on CDH spaces. I.'' Special issue on set-theoretic topology. Topology Appl. 28 (1988), no. 2, 147--154.

\end{thebibliography}
\end{document}